  \theoremstyle{plain}
  \newtheorem{theorem}{Theorem}[section]
  \newtheorem{corollary}{Corollary}[section]
   \theoremstyle{remark}
\newcommand{\Om}{\Omega}
\newcommand{\RR}{{\mathbb{R}}}
\newcommand{\Ga}{\Gamma}
\renewcommand{\phi}{\varphi}
  \numberwithin{equation}{section}
  \numberwithin{figure}{section}
\renewcommand{\baselinestretch}{1.00}
\begin{document}

\title{On the convexity theory of generating functions}

\author{Gr\'egoire Loeper}
\address{School of Mathematical Sciences, Monash University, Clayton, Vic. 3800, Australia}
\email{gregoire.loeper@monash.edu}

\author{Neil S. Trudinger}
\address{Mathematical Sciences Institute, The Australian National University,
              Canberra, ACT 0200, Australia}
\email{Neil.Trudinger@anu.edu.au}

\

\thanks{Research supported by  Australian Research Council Grants (DP170100929, DP180100431) }

\subjclass[2010]{35J60, 52A99, 78A05 }

\date{\today}

\keywords{generating functions, convexity theory }

\maketitle

\abstract {In this paper, we extend our convexity theory for $C^2$ cost functions in optimal transportation to more general generating functions, which were originally introduced by the second author to extend the framework of optimal transportation to embrace near field geometric optics. In particular we provide an alternative geometric treatment to the previous analytic approach using differential inequalities, which also gives a different derivation of the invariance of the fundamental regularity conditions under duality. We also extend our local theory to cover the strict version of these conditions for $C^2$ cost and generating functions.}
\endabstract


\baselineskip=12.8pt
\parskip=3pt
\renewcommand{\baselinestretch}{1.38}

\section{Introduction}\label{Section 1}

\vskip10pt

Generating functions were introduced by the second author in \cite{T2014} as nonlinear extensions of affine functions in Euclidean space for the purpose of extending the framework of optimal transportation to embrace near field geometric optics. Regularity and classical existence results depend on an underlying convexity theory which is of interest in its own right. Following our previous treatment in the optimal transportation case \cite{LT2020}, we develop in this paper the convexity theory under minimal smoothness assumptions on the generating function. As in \cite{LT2020}, the approach is largely geometric, somewhat shadowing that in \cite{TW2009-1}, and as a byproduct we obtain a completely different derivation of the invariance of our conditions under duality, from the complicated analytic calculation in \cite{T2014}.

A generating function can be defined on the product of two Riemannian manifolds and the real line. Here as in \cite{T2014} we will restrict attention to the Euclidean space case so that our generating functions are defined on domains $\Ga \subset \mathbb{R}^n\times \mathbb{R}^n \times \mathbb{R}$, 
whose projections,
$$I(x,y) = \{z\in\RR |\  (x,y,z)\in\Gamma\},$$
are open intervals. Assuming $g\in C^2(\Gamma)$, $g_z\ne 0$ in $\Gamma$, normalised so that $g_z\ < 0$ in $\Gamma$, and denoting 
\begin{equation}\label{U}
\mathcal{U}=\{(x,g(x,y,z),g_x(x,y,z))|\  (x,y,z)\in \Gamma\},
\end{equation} 

\noindent we then have the following two fundamental conditions from \cite{T2014},

\begin{itemize}
\item[{\bf A1}:]
For each $(x,u,p) \in \mathcal U$, there exists a unique point $(x,y,z)\in\Gamma$
satisfying
$$g(x,y,z) = u, \ \ g_x(x,y,z) = p.$$
\item[{\bf A2}:]
 $\det E \ne 0$, in $ \Gamma$, where $E$ is the $n\times n$ matrix given by
$$E = [E_{i,j}] =  g_{x,y} - (g_z)^{-1}g_{x,z}\otimes g_y.$$
\end{itemize}.

In the special case of optimal transportation,
\begin{equation}
g(x,y,z) = -c(x,y) - z,\quad   \Gamma = \mathcal D \times\RR,  \quad g_z = -1, I(x,y)  = \RR,\quad E = -c_{x,y},  
\end{equation}
\noindent where $\mathcal D$ is a domain in  $\RR^n\times\RR^n$ and $c\in C^2(\mathcal D)$ is a \emph{cost function}, satisfying conditions A1 and A2 in \cite{MTW2005}.

 By defining $Y(x,u,p)=y$ and $Z(x,u,p)=z$ in A1, the mapping $Y$ together with the dual function $Z$ are generated by the equations
\begin{equation}\label{generating equation}
g(x,Y,Z)=u, \quad g_x(x,Y,Z)=p.
\end{equation}
Since the Jacobian determinant of the mapping $(y,z)\rightarrow (g_x,g)(x,y,z)$ is $g_z\det E, \neq 0$ by A2, the functions $Y$ and $Z$ are $C^1$  smooth. By differentiating \eqref{generating equation} with respect to $p$, we also have $Y_p=E^{-1}$.

Our next fundamental condition is expressed in terms of the matrix function $A$  on $\mathcal U$, given by 

\begin{equation}\label{A}
A(x,u,p)=g_{xx}(x,Y(\cdot,u,p),Z(x,u,p)),
\end{equation}

\noindent and extends condition G3w in \cite{T2014} to non differentiable $A$.  For its formulation we use the notation $\mathcal U(x,u)$ to denote the projection $\{ p \in  \mathbb{R}^n | (x,u,p) \in \mathcal U\}$.

\begin{itemize}
\item[{\bf A3w}]:
The matrix function $A$ is  is co-dimension one convex in $\mathcal U$,  with respect to $p$,
in the sense that the function $(A\xi,\xi) (x,u,\cdot)$  is convex along line segments in $\mathcal U(x,u)$ orthogonal to $\xi$, for all $\xi \in \RR^n$,
$(x,u) \in \mathbb{R}^n \times \mathbb{R}$.
\end{itemize}

We also have a strict version of condition A3w, extending condition G3 in \cite{T2014}, namely

\begin{itemize}
\item[{\bf A3s}]:
The matrix function $A$ is  is locally uniformly co-dimension one convex in $\mathcal U$,  with respect to $p$,
in the sense that the function $(A\xi,\xi) (x,u,\cdot)$  is uniformly convex along closed line segments in $\mathcal U(x,u)$ orthogonal to $\xi$, for all $\xi \in \RR^n$, $(x,u) \in \mathbb{R}^n \times \mathbb{R}$. 
\end{itemize}
\noindent Note that here we call a function $f: I \rightarrow \mathbb{R}$ on a closed interval $I$ uniformly convex if the function, $t \rightarrow f(t) - \delta t^2$,  is convex on $I$ for some positive constant $\delta$.

When $A$ is twice differentiable in  $p$ then conditions A3w,  (A3s), can be expressed as 

\begin{equation}
(D_{p_kp_l}A_{ij}) \xi_i\xi_j\eta_k\eta_l \ge 0, (>0),
\end{equation}

\noindent in $\mathcal U$, for all  $\xi,\eta \in \mathbb{R}^n$ such that $\xi \!\cdot\! \eta = 0$. 

Historically these conditions arose from condition A3 for local regularity in optimal  transportation introduced in \cite{MTW2005}, with the weak version A3w subsequently introduced in \cite{Tru2006, TW2009} for global regularity. We will express them in the non smooth case more precisely in Section 2 and moreover show that they can also be formulated for generating functions $g \in C^1(\Gamma)$ satisfying just condition A1, corresponding to the optimal transportation case in \cite{Loe2009}, where it is also shown that condition A3w is necessary for the regularity and convexity theories. 

The strict monotonicity property of the generating function $g$ with respect to $z$, enables us to define a dual generating function $g^*$,
\begin{equation}\label{dual generating function g*}
g(x,y,g^*(x,y,u))=u,
\end{equation}
with $(x,y,u)\in \Gamma^* :=\{(x,y,g(x,y,z)) |  (x,y,z) \in \Gamma\}$, $g^*_x =-g_x/g_z$, $g^*_y=-g_y/g_z$ and $g^*_u=1/g_z$, which leads to a dual condition to A1, which is also critical for our convexity theory, namely

\begin{itemize}
\item[{\bf A1*}:]
The mapping $Q: = -g_y/g_z$ is one-to-one in $x$, for all $(y,z)$ such that $(x,y,z) \in \Gamma$.
\end{itemize}

Since the Jacobian matrix of the mapping $x\to Q(x,y,z)$ is $-E^t/g_z$ where $E^t$ is the transpose of $E$, its determinant will not vanish when condition A2 holds,  that is A2 is self dual.  We will prove the invariance of conditions A3w and A3s under duality from the local convexity theory in Section 2, which also provides an alternative proof of the case  $g \in C^4(\Gamma)$ in \cite{ T2014}, which is done there through explicit calculation of $D_{pp}A$. Note that by setting 
$$P(x,y,u) = g_x\big(x,y,g^*(x,y,u)\big),$$
\noindent we may also express condition A1 in the same form as  A1*, namely the mapping $P$ is one-to-one in $y$, for all
$(x,u)$ such that $(x,y,u)\in \Ga^*$.

\vskip10pt

\section{Local convexity}\label{Section 2}

We recall the definition from \cite{T2014} that a domain $\Omega$ is $g$-convex, (uniformly $g$-convex), with respect to 
$(y_0, z_0) \in  \mathbb{R}^n\times \mathbb{R}$,  if $(\Omega, y_0, z_0) \subset \Gamma$ and  the image $Q_0(\Omega):= Q(\cdot, y_0,z_0)(\Omega)$ is convex, (uniformly convex), in $\mathbb{R}^n$. In this section, we will examine the relationship between local $g$-convexity at boundary points of $g$-sections and condition A3w. First we note that we can write condition A3w in the form:
\begin{equation}\label{2.1}
(A\xi,\xi) (x_0,u_0, p_\theta) \le (1-\theta)(A\xi,\xi) (x_0,u_0, p_0) + \theta(A\xi,\xi) (x_0,u_0, p_1),
\end{equation}
\noindent for any $(x_0,u_0,[p_0, p_1]) \subset \mathcal U$, $p_\theta = (1-\theta)p_0 + \theta p_1$, $0\le\theta\le 1$ and $\xi.(p_1-p_0) = 0$. Here and throughout we use the notation $[p_0,p_1]$ to denote the closed straight line segment joining points $p_0$ and $p_1$ in $\mathbb{R}^n$.

Defining now 

\begin{equation}\label{2.2}
\begin{array}{rl}
& y_\theta  = Y(x_0,u_0, p_\theta), \quad z_\theta = Z(x_0, u_0, p_\theta) = g^*( x_0, y_\theta, u_0),\\ 

& h_\theta(x)  = g(x, y_\theta, z_\theta) - g(x, y_0, z_0),
\end{array}
\end{equation}

\noindent for $ x\in \Omega_0 = \mathcal U(u_0,[p_0,p_1]): = \{x  | (x, u_0, [p_0, p_1]) \in \mathcal U \}$, $\theta \in (0,1]$, we see that \eqref{2.1} can be written as

\begin{equation} \label{2.3}
(D^2 h_\theta \xi,\xi)(x_0) \le \theta (D^2h_1 \xi, \xi)(x_0),
\end{equation}

\noindent for all $\xi \in \mathbb R^n$ such that $\xi.Dh_\theta(x_0) = 0$.  This leads to the following geometric interpretation of condition A3w. Namely for  $S_\theta = \{ x \in\Omega_0 \  | \  h_\theta(x) < 0\}$, the second fundamental form $\Pi_\theta$ of $\partial S_\theta$ at $x=x_0$, with respect to an inner normal, is non-decreasing in $\theta$.  Clearly \eqref{2.3} is equivalent to $\Pi_\theta \le \Pi_1$ and the general case follows by replacing $p_1$ by $p_{\theta^\prime}$ for any $\theta^\prime\in (0,1]$. Note that $\Pi_\theta$ is well defined at $x_0$ since $\partial S_\theta \in C^2$ in some neighbourhood $\mathcal N_0$ of $x_0$. By extending the segment $[p_0,p_1]$ beyond $p_0$, we also have that $\Pi_\theta$ is bounded independently of $\theta$. From \eqref{2.3}, following the optimal transportation case in \cite {LT2020}, it also follows that condition A3w can be expressed in terms of $g$ and $g_x$ only, using just condition A1, namely:

\begin{itemize}
\item[{\bf A3v}:]
For  any $(x_0, u_0, [p_0, p_1]) \subset \mathcal U$,  we have  

$$ g(x, y_\theta, z_\theta) \le \max\{ g(x, y_0, z_0), g(x, y_1, z_1)\} + o(|x-x_0|^2),$$

\noindent for $x\in\Omega_0$,  for any  $\theta\in(0,1)$.
 
\end{itemize}

\noindent Note that  the "$o$" term in condition A3v may depend on $\theta$.

In this section, we will prove the following further equivalent characterisations of condition A3w, when condition A2 and the dual condition A1* are also satisfied. As well as providing the relationship between condition A3w  and the local g-convexity of sections $S_\theta$, the result also strengthens condition A3v by removing the "o" dependence.

\begin{theorem}\label{Th2.1}
Let $g\in C^2(\Gamma)$ be a generating function satisfying conditions A1, A2, and A1*. Then condition A3w is invariant under duality and is equivalent to the conditions:
\begin{itemize}
\item[{\bf A3w(1)}:]
For all  $(x_0, u_0, [p_0, p_1]) \subset \mathcal U$, the set $S_1$ is locally $g$-convex at $x_0$, with respect to $(y_0, z_0)$;
\item[{\bf A3w(2)}:]
For all  $(x_0, u_0, [p_0, p_1]) \subset \mathcal U$, there exists a neighbourhood $\mathcal N_0$ of $x_0$ such that

$$g(x, y_\theta, z_\theta) \le \max\{ g(x, y_0, z_0), g(x, y_1, z_1)\}$$

\noindent for all $x\in \mathcal N_0$, $\theta \in [0,1]$. 
\end{itemize}
\end{theorem}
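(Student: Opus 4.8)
The plan is to establish the chain of implications A3w $\Leftrightarrow$ A3w(1) $\Leftrightarrow$ A3w(2), together with the duality invariance, by exploiting the geometric reformulation \eqref{2.3} already derived in the text. The most natural logical route is A3w $\Rightarrow$ A3w(2) $\Rightarrow$ A3w(1) $\Rightarrow$ A3w $\Rightarrow$ (dual A3w), since A3w(2) trivially implies A3v, and A3w(1) is a pointwise statement about the second fundamental form that feeds directly back into \eqref{2.3}. I would begin by recording the basic local picture: for $(x_0,u_0,[p_0,p_1])\subset\mathcal U$ one has $h_0\equiv 0$, $h_1(x_0)=0$, $Dh_1(x_0)=p_1-p_0\ne 0$, and more generally $h_\theta(x_0)=0$, $Dh_\theta(x_0)=p_\theta-p_0=\theta(p_1-p_0)$, so that all the hypersurfaces $\partial S_\theta$ pass through $x_0$ with the \emph{same} unit normal $\nu=(p_1-p_0)/|p_1-p_0|$; this is the key alignment that makes the comparison of second fundamental forms meaningful. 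Since $g\in C^2$ and A2 holds, $Y,Z$ are $C^1$, so $h_\theta\in C^2(\Omega_0)$ depends continuously on $\theta$ together with its first and second derivatives, and $\partial S_\theta\in C^2$ near $x_0$.

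For A3w $\Rightarrow$ A3w(2): fix $\theta\in(0,1)$ and a small ball $\mathcal N_0=B_r(x_0)$. On the region where $h_0=0\le h_1$, i.e. where $g(x,y_0,z_0)\ge g(x,y_1,z_1)$, I must show $h_\theta\le 0$; on the complementary region I must show $g(x,y_\theta,z_\theta)\le g(x,y_1,z_1)$, i.e. $h_\theta\le h_1$. The standard device is to consider, for fixed $x$, the function $\theta\mapsto h_\theta(x)$ and show it is quasi-convex, or equivalently to run the argument along the curve $\theta\mapsto y_\theta,z_\theta$; the infinitesimal inequality \eqref{2.3} controls the second-order behaviour transverse to the level set, while along the level set $h_\theta=h_1=0$ the tangential derivatives also agree to first order. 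A clean way to organise this: parametrise $\partial S_\theta$ near $x_0$ as a graph $x_n=\phi_\theta(x')$ over the tangent hyperplane $\{\nu\cdot(x-x_0)=0\}$; then $\phi_\theta(0)=0$, $D\phi_\theta(0)=0$, and \eqref{2.3} says $D^2\phi_\theta(0)$ (the second fundamental form up to the sign of $g_z<0$ and the normalisation of $h_\theta$) is monotone in $\theta$, hence $\phi_\theta\le\phi_1$ near $0$ after shrinking $r$; translating back gives $S_\theta\subset S_1$ near $x_0$, and then one checks that inside $S_1$ one has $h_\theta\le 0\le\max\{h_0,h_1\}$ trivially where $h_0\le 0$, while the sign bookkeeping of $h_\theta$ versus $h_1$ on the rest is handled by the same graph comparison applied to the hypersurface $\{h_\theta=h_1\}$. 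This last point — going from the infinitesimal convexity of a single family of functions to the genuine (non-infinitesimal) pointwise inequality A3w(2), uniformly over the relevant region and with control of the neighbourhood — is what I expect to be the main obstacle, exactly as in \cite{LT2020}; the resolution is the graph/second-fundamental-form comparison together with a compactness argument that the curvature bounds are uniform on $[p_0,p_1]$ (the text already notes $\Pi_\theta$ is bounded independently of $\theta$, and that one may extend the segment past $p_0$).

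The implication A3w(2) $\Rightarrow$ A3w(1) is essentially immediate: A3w(2) with the roles arranged so that $\max$ is attained by $g(x,y_0,z_0)$ on one side shows $S_1\cap\mathcal N_0=\{x\in\mathcal N_0: h_1(x)<0\}$ lies on the $g$-convex side; more precisely, using A1* and A2 one transports the section $S_1$ by the map $x\mapsto Q(x,y_0,z_0)$, whose Jacobian is $-E^t/g_z\ne 0$, and A3w(2) forces the boundary $\partial S_1$ to be locally supported from the $(y_0,z_0)$ side, i.e. $Q_0(S_1)$ is locally convex at $Q_0(x_0)$ — this is precisely local $g$-convexity at $x_0$ with respect to $(y_0,z_0)$. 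For A3w(1) $\Rightarrow$ A3w: local $g$-convexity of $S_1$ at $x_0$ says that, after the change of variables $x\mapsto Q_0(x)$, the boundary $\partial S_1$ has nonnegative second fundamental form at $x_0$ with respect to the inner normal; pulling this back through the $C^1$ diffeomorphism $Q_0$ (and using A2 so it is a genuine local diffeomorphism) gives a lower bound on $\Pi_1$ at $x_0$ relative to the corresponding curvature at $\theta=0$, which is the $\theta=1$ endpoint case of \eqref{2.3}; the general $\theta$ case follows by replacing $p_1$ with $p_{\theta'}$, exactly as in the derivation of A3v in the text. Finally, duality invariance: once A3w is characterised by the purely $g$-and-$g_x$ statement (and by the local $g$-convexity of sections), one applies the whole equivalence to the dual generating function $g^*$, noting from the excerpt that $g^*_x=-g_x/g_z$, $g^*_u=1/g_z$, that A2 is self-dual (Jacobian $-E^t/g_z$), and that A1 and A1* swap roles under $g\leftrightarrow g^*$; a $g$-section for $g$ with respect to $(y_0,z_0)$ corresponds to a $g^*$-section for $g^*$, and the condition A3w(1) is manifestly symmetric under this correspondence, so A3w for $g$ is equivalent to A3w for $g^*$. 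I would present the duality step last and keep it short, since the real content is the local geometric equivalence, and I would remark that this reproves the $C^4$ computation of $D_{pp}A$ in \cite{T2014} by an argument that never differentiates $A$.
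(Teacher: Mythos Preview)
Your chain A3w $\Rightarrow$ A3w(2) $\Rightarrow$ A3w(1) $\Rightarrow$ A3w, with duality appended at the end, differs from the paper's, and two of your links have genuine gaps.

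\emph{A3w $\Rightarrow$ A3w(2).} The monotonicity of $\Pi_\theta$ at $x_0$ alone gives only $\phi_\theta\le\phi_1+o(|x'|^2)$. To do better you must apply the monotonicity at points $x^*\in\partial S_\theta$ \emph{near} $x_0$; but there the normal to $\partial S_\theta$ no longer coincides with that of $\partial S_1$, so \eqref{2.3} does not compare the two hypersurfaces at $x^*$. The paper handles this by introducing, at each $x^*$, auxiliary data $(y_1^*,z_1^*)=(Y,Z)\big(x^*,u_0^*,p_0^*+\theta^{-1}Dh_\theta(x^*)\big)$ and proving via the matrix identity \eqref{2.5} that $\theta^{-1}Dh_\theta(x^*)\to p_1-p_0$ uniformly in $\theta$. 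Even so, this yields only \eqref{2.8}--\eqref{2.9}: the strengthened A3v with the $o(|x-x_0|^2)$ term independent of $\theta$, \emph{not} A3w(2). Your ``compactness argument that the curvature bounds are uniform'' does not remove this error term; the paper removes it by routing through duality.

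\emph{A3w(1) $\Rightarrow$ A3w.} You propose to pull the nonnegative second fundamental form of $Q_0(\partial S_1)$ back through $Q_0$. But $Q_0=-g_y/g_z(\cdot,y_0,z_0)$ is only $C^1$ when $g\in C^2$, so $Q_0(\partial S_1)$ is only a $C^1$ hypersurface and second fundamental forms do not transform. The paper never attempts this implication directly; it is precisely here that one must leave the $x$-picture.

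\emph{Duality.} Asserting that A3w(1) is ``manifestly symmetric'' under $g\leftrightarrow g^*$ is not a proof: a $g$-section $S_1=\{g_1<g_0\}$ and the corresponding $g^*$-section are different objects, and the correspondence has to be established.

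The paper resolves all three issues in one stroke that your scheme omits: from the local $g$-convexity of $S_1$ one observes that the $g$-segment from $x_0$ to any $x\in S_1\cap\mathcal N_0$ stays in $S_1$, so $g(x_\theta,y_1,z_1)\le g(x_\theta,y_0,z_0)=u_\theta$, which rewrites as $g^*(x_\theta,y_1,u_\theta)\le g^*(x_0,y_1,u_0)$---and this, after swapping the roles of $x_0$ and $x_1$, is exactly the \emph{dual} inequality A3w(2)$^*$ near $y_0$. Thus the chain is A3w $\Rightarrow$ A3w(1) $\Rightarrow$ A3w(2)$^*$ $\Rightarrow$ A3w$^*$; running it twice gives all the equivalences and the duality invariance simultaneously. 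In your plan duality is an afterthought; in the paper it is the mechanism that closes the circle and eliminates the $o(|x-x_0|^2)$ defect.
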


\begin{proof}

We will prove Theorem 2.1 by proving  the implications, $A3w \Longrightarrow A3w(1) \Longrightarrow A3w(2)^*$ . The remaining assertions follow automatically, since as remarked above, $A3w(2) \Longrightarrow A3v \Longrightarrow A3w$, when $g\in C^2( \Gamma)$.

(i) $A3w \Longrightarrow A3w(1)$.

\noindent This is the main component  of the proof. First we represent the level sets $\partial S_\theta$ as graphs given by $x_n = \eta_\theta (x^\prime)$, $x^\prime=(x_1,\cdots, x_{n-1})$, near the point $x_0$, tangent at $x_0$ to the hyperplane $\{x_n = 0 \}$, with $S_\theta = \{x_n > \eta_\theta\}$ near $x_0$. Accordingly we then have $ D_i \eta (x^\prime_0) = D_i h(x_0) = 0$, $i = 1, \cdots n-1$,  $D_n h(x_0) = -\theta |p_1-p_0|$ and for $x^* =(x^\prime,x_n) \in \partial S_\theta$,  

\begin{equation} \label{2.4}
\frac{1}{\theta} Dh_\theta (x^*) \rightarrow  \frac{1}{\theta} Dh_\theta (x_0) = p_1-p_0
\end{equation}

\noindent as $x^*\rightarrow x_0$, uniformly in $\theta\in (0,1]$. To verify \eqref{2.4}, we write
\begin{equation} \label{2.5}
\begin{array}{ll}
\frac{1}{\theta} Dh_\theta (x^*)  \!\!&\!\! = \frac{1}{\theta} \{ g_x(x^*,y_\theta, z_\theta)-g_x(x^*, y_0, z_0)\}  \\
                                                  \!\!&\!\! = E(x^*, y _{\theta^\prime}, z_{\theta^\prime}) E^{-1} (x_0, y_{\theta^\prime} , z_{\theta^\prime}) (p_1-p_0),
\end{array} 
\end{equation}       
 
 \noindent for some $\theta^\prime \in (0,\theta)$, and then use the continuity of $E  =  g_{x,y} - (g_z)^{-1}g_{x,z}\otimes g_y$ with respect to $x$.  
 Defining 
 $$ y_1^* = Y(x^*, u_0^*, p_1^*), \quad z_1^* = Z(x^*, u_0^*, p_1^*)$$

\noindent where

$$ u_0^* = g(x^*,y_0,z_0), \quad p_0^*=  g_x(x^*,y_0,z_0), \quad p_1^* = p_0^* + \frac{1}{\theta} Dh_\theta (x^*),$$

\noindent it then follows that $y_1^*$ and $z_1^*$ also converge respectively to $y_1$ and $z_1$ as $x^*\rightarrow x_0$, uniformly in $\theta\in (0,1]$. 
Letting $\nu_\theta = - Dh_\theta /|Dh_\theta |$ denote the unit inner normal to  $\partial S_\theta$
and setting for $\tau^\prime \in \mathbb {R}^{n-1}$, $\tau = \tau_\theta = \tau^\prime - (\tau^\prime . \nu_\theta)\nu_\theta$, tangent to $\partial S_\theta$, we now apply condition A3w, or more precisely the monotonicity of $\Pi_\theta$ at a point $x^* = (x^\prime, x_n)\in \partial S_\theta$, near $x_0$, to obtain

$$ \frac{D_{ij} \eta_\theta (x^\prime) \tau_i \tau_j}{\sqrt{1 + |D\eta_\theta |^2}} \le \frac{\{g_{ij}(x^*,y_1^*,z_1^*) -g_{ij}(x^*,y_0,z_0)\} \tau_i \tau_j}{|p_1^*-p_0^*|}. $$ 

\noindent Sending $x^*$ to $x_0$ and using also this time the continuity of $g_{xx}$, as well as the boundedness of $\Pi_\theta$ to control the dependence on $\nu_\theta$, we then conclude for any unit vector $\tau^\prime \in  \mathbb {R}^{n-1}$, 

\begin{equation} \label{2.6}
D_{ij} \eta_\theta (x^\prime) \tau^\prime_i \tau^\prime_j \le D_{ij} \eta_1 (x^\prime) \tau^\prime_i \tau^\prime_j  + o(1)
\end{equation}

\noindent as $x^\prime \rightarrow x_0^\prime$, uniformly for $\theta\in (0,1].$   From \eqref{2.6} we now have the uniform lower bound for $\eta_1$,

\begin {equation}\label{2.7}
\eta_1(x^\prime) \ge \eta_\theta(x^\prime) + o(|x^\prime - x_0^\prime|^2
\end{equation}

\noindent as $x^\prime \rightarrow x_0^\prime$, uniformly for $\theta\in (0,1].$ Writing \eqref{2.7} in terms of $h_\theta$ we then obtain

\begin{equation} \label{2.8}
\frac{1}{\theta} h_\theta(x)  \le \max\{0,h_1(x)\} + o(|x-x_0|^2)
\end{equation}
for $x$ near $x_0$,  independently of $\theta$. Note that by exchanging $g_0$ and $g_1$ in \eqref{2.8}, we obtain a stronger version of condition A3v, without assuming condition A1*, where the "o" dependence is independent of $\theta$, namely
\begin{equation} \label {2.9}
 g(x, y_\theta, z_\theta) \le \max\{ g(x, y_0, z_0), g(x, y_1, z_1)\} + \theta (1-\theta) o(|x-x_0|^2).
 \end{equation}

\noindent Letting $\theta$ approach $0$ in \eqref{2.8}, we also obtain 

$$-g_z(x,y_0,z_0) h_0 \le \max\{h_1(x),0\} +o(|x-x_0|^2),$$

\noindent  where 

\begin{equation}\label{2.10}
 h_0 = E^{-1}(x_0,y_0,z_0) (p_1-p_0)\cdot [Q(x,y_0,z_0)-Q(x_0,y_0, z_0)]
 \end{equation}
 
 \noindent is the defining function of the $g$-hyperplane,  $S_0 = \{h_0 = 0\}$. 
 Now using condition A1*, making the coordinate transformation $x \rightarrow q= Q(x, y_0, z_0)$ and defining
 $\tilde S_1 = Q(S_1)$, $\tilde h_1(q) = h_1(x)$,  so that  $\tilde h_1$ is a defining function for $\tilde S_1$ near $q_0 = Q(x_0,y_0, z_0)$, we then obtain, using the Lipschitz continuity of $ Q ^{-1} (\cdot, y_0,z_0)$ and the positivity of $-g_z(\cdot,y_0, z_0)$,
 
$$\tilde h_1(q) \geq l(q) - o(|q-q_0|)^2$$

\noindent where $l$ is an affine function. It thus follows that the set $ \tilde S_1 = \{\tilde h_1 <0 \}$ is locally convex at $q=q_0$ and we complete the proof of assertion (i). 

(ii) $A3w(1) \Longrightarrow A3w(2)^*$

First we note that the local $g$-convexity of $S_1$ at $x_0$ means that there exists a neighbourhood $\mathcal N_0$ of $x_0$ such that $S_1\cap \mathcal N_0$ is $g$-convex with respect to $y_0,z_0$. Consequently for any point $x\in S_1\cap \mathcal N_0$, the g-segment joining $x_0$ and $x$ also lies in $S_1\cap \mathcal N_0$. Defining now $q_0 = Q(x_0, y_0, z_0)$, $q_1 = Q(x, y_0,z_0)$ and $q_\theta = (1-\theta) q_0 + \theta q_1$, we thus have 

$$ g(x_\theta, y_1, z_1) \le  g(x_\theta, y_0, z_0) : = u_\theta,$$

\noindent for  $x_\theta =  Q^{-1} ( q_\theta, y_0, z_0)$, which is equivalent to 

$$ g^*(x_\theta, y_1, u_\theta) \le z_1 = g^*( x_0, y_1, u_0).$$ 

\noindent Taking $y=y_1, x_1= x$ and exchanging $x_0$ and $x_1$,  we thus obtain for $(y_0,z_0, [q_0, q_1]) \subset \mathcal V: = \{ y, z, Q(x, y, z) | (x,y,z) \in \Gamma \}$, 

\begin{equation} \label{2.11}
g^* ( x_\theta, y, u_\theta) \le \max \{g^* ( x_0, y, u_0), g^* ( x_1, y, u_1)\}
\end{equation}
 
\noindent  for $y$ in some neighbourhood $\mathcal N_0^*$ of $y_0$ and $\theta \in [0,1])$, provided $x_1$ is sufficiently close to $x_0$. By expressing the interval $[q_0, q_1]$ as the union of sufficiently small subintervals we then conclude the dual condition A3w(2)$^*$.  

From (i) and  (ii) we then have $A3w \Longrightarrow A3w(1) \Longrightarrow A3w^* \Longrightarrow A3w (2) \Longrightarrow A3w$ so that Theorem 1.1 is completely proved. 

\end{proof}

We remark here that the proof of Theorem 2.1 is somewhat different from that of the corresponding results in the optimal transportation case in
Theorem 1.2 of  \cite{LT2020} in that it avoids the measure theoretic argument in Lemma 2.3 of \cite{LT2020}. When the third derivatives $g_{xxy}$ and $g_{xxz}$ so that the matrix function $A$ is differentiable with respect to $p$ and implication (i) follows directly from Lemma 2.4 in \cite{T2014}, in accordance with the optimal transportation case in Section 2.1 of \cite{LT2020}. Alternatively, in this case the mapping $Q$ will be twice differentiable in $x$ and we can simplify the proof of implication (i) through a $C^2$ coordinate change to express $h_0$ as an affine function in the $q$ variable so that the result then follows straight from the monotonicity of $\Pi_{\theta}$. The equivalence of A3w and A3w(2) for $C^4$ cost functions in optimal transportation goes back to \cite{Loe2009}, where it plays a fundamental role in showing the sharpness of condition A3w for regularity. 

By modification of the preceding arguments we can prove analogous equivalent versions of the strong condition A3s, including its invariance under duality. For this it is convenient to fix a subset $\mathcal U^\prime\subset \subset \mathcal U$. Then we can write condition A3s in the form

\begin{equation}\label{2.12}
(A\xi,\xi) (x_0,u_0, p_\theta) \le (1-\theta)(A\xi,\xi) (x_0,u_0, p_0) + \theta(A\xi,\xi) (x_0,u_0, p_1) - \delta \theta (1-\theta) |p_1-p_0|^2 |\xi|^2,
\end{equation}

\noindent for any $(x_0,u_0,[p_0, p_1]) \subset \mathcal U^\prime$,  $0\le\theta\le 1$ , $\xi.(p_1-p_0) = 0$ and some positive constant $\delta$, depending  on $\mathcal U^\prime$.  In place of \eqref{2.3}, we then have, for $h_{\theta,\delta} := h_\theta - \frac{\delta}{2}|p_\theta -p_0|^2 |x-x_0|^2$,

\begin{equation} \label{2.13}
\frac{1}{\theta} (D^2 h_{\theta,\delta} \xi,\xi)(x_0) \le  (D^2h_{1,\delta} \xi, \xi)(x_0) 
\end{equation}

\noindent for all $\xi \in \mathbb R^n$ such that $\xi.Dh_{\theta,\delta}(x_0) = 0$. Now, setting $S_{\theta,\delta} = \{ x \in\Omega_0 \  | \  h_{\theta,\delta}(x) < 0\}$, we can state the following strong version of Theorem 2.1. 
\begin{theorem}\label{Th2.2}
Let $g\in C^2(\Gamma)$ be a generating function satisfying conditions A1, A2, and A1*. Then condition A3s is invariant under duality and is equivalent to the conditions:
\begin{itemize}
\item[{\bf A3s(1)}:]
For all  $(x_0, u_0, [p_0, p_1]) \subset\mathcal U^\prime\subset \subset \mathcal U$, the set $S_{1,\delta}$ is locally  $g$-convex at $x_0$, with respect to $(y_0, z_0)$;
\item[{\bf A3s(2)}:]
For all  $(x_0, u_0, [p_0, p_1]) \subset \mathcal U^\prime\subset \subset \mathcal U$, there exists a neighbourhood $\mathcal N_0$ of $x_0$  
and constant $\delta_0 >0$ such that
\begin{equation}\label {2.13}
g(x, y_\theta, z_\theta) \le \max\{ g(x, y_0, z_0), g(x, y_1, z_1)\} - \delta_0 [\theta(1-\theta)|p_1-p_0| |x-x_0|]^2
\end{equation}
\noindent for all $x\in \mathcal N_0$, $\theta \in [0,1]$. 
\end{itemize}
\end{theorem}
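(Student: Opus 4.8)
The plan is to mirror the proof of Theorem~\ref{Th2.1}, carrying the quadratic defect $\delta\theta(1-\theta)|p_1-p_0|^2|\xi|^2$ through each step uniformly in $\theta$. We prove the chain $A3s \Longrightarrow A3s(1) \Longrightarrow A3s(2)^* \Longrightarrow A3s$, the last implication following from the already-noted cycle once we observe that $A3s(2)$ implies a strict (uniform) version of $A3v$ which in turn forces \eqref{2.12}. The key point is that all the convergences in the proof of Theorem~\ref{Th2.1}---of $\frac1\theta Dh_\theta(x^*)$ to $p_1-p_0$, of $y_1^*,z_1^*$ to $y_1,z_1$, of $g_{xx}$ along $\partial S_{\theta,\delta}$---are uniform in $\theta\in(0,1]$ \emph{and} in $(x_0,u_0,[p_0,p_1])$ ranging over the relatively compact set $\mathcal U'$; this is exactly why we fixed $\mathcal U'\subset\subset\mathcal U$ and why the constant $\delta$ in \eqref{2.12} is allowed to depend on $\mathcal U'$.

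For $A3s \Longrightarrow A3s(1)$: represent $\partial S_{\theta,\delta}$ as a graph $x_n=\eta_{\theta,\delta}(x')$ near $x_0$, tangent to $\{x_n=0\}$, with $S_{\theta,\delta}=\{x_n>\eta_{\theta,\delta}\}$. Since $h_{\theta,\delta}=h_\theta-\tfrac\delta2|p_\theta-p_0|^2|x-x_0|^2$ differs from $h_\theta$ by a quadratic vanishing to second order at $x_0$, the boundary gradient identity $D_nh_{\theta,\delta}(x_0)=-\theta|p_1-p_0|$ and the uniform convergence \eqref{2.4}, \eqref{2.5} persist verbatim. Applying \eqref{2.13} (the strict analogue of \eqref{2.3}) together with the monotonicity of the second fundamental forms $\Pi_{\theta,\delta}$ of $\partial S_{\theta,\delta}$ at interior points $x^*$, and sending $x^*\to x_0$ using continuity of $g_{xx}$ and boundedness of $\Pi_{\theta,\delta}$, yields the strict analogue of \eqref{2.6}:
\begin{equation*}
D_{ij}\eta_{\theta,\delta}(x')\tau'_i\tau'_j \le D_{ij}\eta_{1,\delta}(x')\tau'_i\tau'_j + o(1)
\end{equation*}
as $x'\to x_0'$, uniformly for $\theta\in(0,1]$. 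Integrating twice gives $\eta_{1,\delta}(x')\ge\eta_{\theta,\delta}(x')+o(|x'-x_0'|^2)$, hence $\tfrac1\theta h_{\theta,\delta}(x)\le\max\{0,h_{1,\delta}(x)\}+o(|x-x_0|^2)$ uniformly in $\theta$. Letting $\theta\to0$ and transforming by $q=Q(\cdot,y_0,z_0)$ exactly as before (using A1* and the Lipschitz bound on $Q^{-1}$, and absorbing the extra $-\tfrac\delta2|p_\theta-p_0|^2|x-x_0|^2$ term into the $o(|q-q_0|^2)$ error since it is itself $O(\theta^2|x-x_0|^2)=o(|q-q_0|^2)$), we conclude $\tilde S_{1,\delta}=\{\tilde h_{1,\delta}<0\}$ is locally convex at $q_0$, i.e. $A3s(1)$ holds.

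The step $A3s(1) \Longrightarrow A3s(2)^*$ follows the argument of part (ii) of Theorem~\ref{Th2.1}, now using $g$-convexity of $S_{1,\delta}$: for $x$ near $x_0$ and the $g$-segment $x_\theta=Q^{-1}(q_\theta,y_0,z_0)$, one gets $g(x_\theta,y_1,z_1)\le g(x_\theta,y_0,z_0)-\tfrac\delta2|p_\theta-p_0|^2|x_\theta-x_0|^2$, which after passing to $g^*$ and exchanging endpoints delivers \eqref{2.13} with some $\delta_0>0$; splitting $[q_0,q_1]$ into small subintervals propagates this to all $\theta\in[0,1]$. The main obstacle is bookkeeping the uniformity: one must verify that the $o(1)$ and $o(|x-x_0|^2)$ terms are uniform not only in $\theta$ but also as $(x_0,u_0,[p_0,p_1])$ varies over $\mathcal U'$, so that a single $\delta_0$ works; this is where relative compactness of $\mathcal U'$ and the uniform continuity of $E$, $g_{xx}$, $Q^{-1}$ on compact subsets of $\Gamma$ are essential, and where the quadratic defect must be shown to survive the coordinate change $x\mapsto q$ without being swallowed prematurely by the error terms.
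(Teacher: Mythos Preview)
Your proposal follows essentially the same route as the paper: the chain $A3s \Rightarrow A3s(1) \Rightarrow A3s(2)^* \Rightarrow A3s^* \Rightarrow \cdots \Rightarrow A3s$, with the first implication obtained by replacing $h_\theta$ with $h_{\theta,\delta}$ in part (i) of Theorem~\ref{Th2.1}, and the second by carrying the quadratic defect through the $g^*$ inequality of part (ii). Two small points where the paper is more explicit: (a) in the step $A3s(1)\Rightarrow A3s(2)^*$ the quadratic correction coming from $x_\theta\in S_{1,\delta}$ involves $|p_1-p_0|^2$, not $|p_\theta-p_0|^2$ as you write, and the paper converts it to the dual variables via the mean value theorem applied to $g^*$ and a lower bound on $-g^*_u$; (b) for $A3s(2)\Rightarrow A3s$ the paper does not appeal to a generic ``strict $A3v$'' argument but fixes $\theta=1/2$ to obtain $S_{1,\delta}\cap\mathcal N_0\subset S_{1/2,\delta}$ with $\delta=\delta_0/6$, which already forces \eqref{2.12} at the midpoint and hence everywhere by the usual convexity lemma. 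Your remark about uniformity over $\mathcal U'$ is apt and is indeed the reason the relatively compact subset is introduced.
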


\begin{proof}

First we may prove that A3s(2) $\Longrightarrow$ A3s by modification of the A3w case, A3v $\Longrightarrow$ A3w. Here though we should restrict the range of $\theta$, a convenient choice being $\theta = 1/2$, which would  then imply $S_{1,\delta}\cap \mathcal N_0 \subset S_{\theta,\delta}$  for
$\delta = \delta_0 /6$,  if A3s(2) holds, and hence \eqref{2.13} for $\theta = 1/2$, which  still suffices to obtain A3s in general.   

Next, the implication $A3s \Longrightarrow A3s(1)$ follows by replacing $h_\theta$ by $h_{\theta,\delta}$ in the proof of the corresponding case (i) in Theorem 2.1.

To prove $A3s(1) \Longrightarrow A3s(2)^*$ we fix a neighbourhood $\mathcal N_0$ of $x_0$ such that $S_{1,\delta}\cap \mathcal N_0$ is $g$-convex with respect to $y_0,z_0$ and for $x = x_1 \in S_{1,\delta}\cap \mathcal N_0$, we define  $q_\theta$, $x_\theta$ and $u_\theta$ as in the proof of case (ii) of Theorem 2.1. Then we have

$$ g(x_\theta, y_1, z_1) + \frac{\delta}{2}|p_1 -p_0|^2 |x_\theta -x_0|^2 \le  g(x_\theta, y_0, z_0) = u_\theta,$$

\noindent so that by the mean value theorem,

$$ g^*(x_\theta, y_1, u_\theta) \le z_1 + \frac{\delta}{2}|p_1 -p_0|^2 |x_\theta -x_0|^2 g^*_u(x_\theta, y_1, u^*),$$ 

\noindent for some $u^*$, satisfying

$$ g(x_\theta, y_1, z_1) \le u^* \le g(x_\theta, y_1, z_1) + \frac{\delta}{2}|p_1 -p_0|^2 |x_\theta -x_0|^2. $$

\noindent Consequently, since  $g^*_u < 0$, we obtain for $y = y_1$ sufficiently close to $y_0$, 

$$ g^*(x_\theta, y_1, u_\theta) \le g^*( x_0, y_1, u_0) - \kappa_0 \delta \theta^2 |q_1-q_0|^2 |y-y_0|^2$$

\noindent for some positive constant $\kappa_0$, depending on $g$ and $\mathcal U^\prime$. Exchanging $x_0$ and $x_1$, and consequently replacing $\theta$ by $1-\theta$, we then obtain, in place of \eqref{2.11},

\begin{equation}
g^* ( x_\theta, y, u_\theta) \le \max \{g^* ( x_0, y, u_0), g^* ( x_1, y, u_1)\} - \delta^*_0[\theta(1-\theta)|q_1-q_0| |y-y_0|]^2
\end{equation}

\noindent for some constant $\delta^*_0$,  for $y$ in some neighbourhood $\mathcal N_0^*$ of $y_0$ and $\theta \in [0,1])$, provided $x_1$ is sufficiently close to $x_0$, and hence infer the dual condition A3s(2)$^*$. 

Corresponding to the proof of Theorem 2.1, which also can be viewed as the limit case $\delta = 0$, we then have $A3s \Longrightarrow A3s(1) \Longrightarrow A3s^* \Longrightarrow A3s (2) \Longrightarrow A3s$, which completes the proof of Theorem 2.2.

\end{proof}

The case $A3s \Longrightarrow A3s(2)$ in Theorem 2.2  extends to $C^2$ generating functions the corresponding result in Lemma 4.5 in \cite{T2020}, which is proved there, similarly to the basic convexity results under A3w, by using the differential inequality approach. We may also express the condition A3s(1) in terms of a local uniform $g$-convexity of $S_1$.Note also that, without assuming the dual condition A1*, we obtain from the proof of the implication $A3s \Longrightarrow A3s(1)$, (in particular from
the estimate \eqref{2.8} applied to $h_{\theta,\delta}$), that condition A3s is equivalent to a strong form of condition A3v, corresponding to \eqref{2.9}, namely

\begin{equation}\label{2.17}
g(x, y_\theta, z_\theta) \le \max\{ g(x, y_0, z_0), g(x, y_1, z_1)\} - \theta(1-\theta) [\delta |p_1-p_0|^2 |x-x_0|^2 
+ o(|x-x_o|^2)],
\end{equation}

\noindent  for all $(x_0, u_0, [p_0, p_1]) \subset\mathcal U^\prime\subset \subset \mathcal U$, $x \in \Omega_0$ and some positive constant $\delta$, with the "o" dependence independent of $\theta$.

\vskip10pt

 \section{Global convexity}\label{Section 3}
 
 In this section we deduce from Theorem 1.1, fundamental properties of $g$-convex functions when $g$ is only assumed $C^2$. First we recall from \cite{T2014} that a  function $u\in C^0(\Om)$ is called $g$-\emph{convex} in  $\Om$,
 if for each $x_0\in\Om$, there exists $(y_0,z_0) \in\mathbb{R}^n\times \mathbb{R}$ such that  $(\Omega,y_0,z_0) \subset \Gamma$ and 
\begin{align}\label{3.1}
u(x_0) &= g(x_0,y_0,z_0), \\
 u(x) &\ge g(x,y_0,z_0)\notag
  \end{align}
\noindent for all $x\in\Om$. If $u$ is differentiable at $x_0$, then $y_0 = Tu (x_0): = 
Y(x_0,u(x_0),Du(x_0))$,
while if $u$ is twice differentiable at $x_0$, then
\begin{equation}\label{3.2}
 D^2u(x_0) \ge  g_{xx}(x_0,y_0,z_0) = A(\cdot,u,Du)(x_0)
\end{equation}
\noindent We  also refer to functions of the form $g(\cdot,y_0,z_0)$ as \emph{$g$-affine} and as
a \emph{$g$-support} at $x_0$ in $\Omega$ if \eqref{3.1} is satisfied. Note also that the $g$-convexity of a function
 $u$ in $\Omega$ implies its local semi-convexity. 
 
If  $u$ is a $g$-convex function on $\Omega$, extending the differentiable case, we define  the \emph{$g$-normal} mapping
of $u$ at $x_0\in\Om$ to be the set: 

$$Tu(x_0) = \big\{ y_0\in \RR^n \mid 
  \Omega \subset \Gamma_{y_0,z_0} \text{ and } \quad u(x) \ge g(x,y_0,z_0)\text{ for all } x\in\Om\big\},$$
 
\noindent where $z_0 = g^*(x_0,y_0,u_0), u_0 =u(x_0)$. Note that if $u =g(\cdot,y,z)$ is  $g$-affine, then $Tu = y$, while in general

$$ Tu (x_0) \subseteq \Sigma_0 = \Sigma_u (x_0) :=Y(x_0,u(x_0),\partial u(x_0)), $$

\noindent where $\partial u$ denotes the sub differential of $u$, provided the extended one jet, 
$J_1[u](x_0) = [x_0,u(x_0), \partial u(x_0)]\subset \mathcal U$

Next if $g_0 = g(\cdot,y_0,z_0)$ is a $g$-affine function, we define the \emph{section} of  a $g$-convex function $u$ with respect to $g_0$ by
 
$$ S(u,g_0) =  \big\{ x\in\Om \mid u(x) <  g(x,y_0,z_0) \big\} $$ 

\noindent We can also  have a notion of  closed sections, (as used in \cite {LT2020}) given by 

$$ \tilde S(u,g_0) =  \big\{ x\in\Om \mid u(x) \le  g(x,y_0,z_0) \big\}, $$ 

\noindent which includes, as a special case, the contact set of  $u$ with respect to $g_0$,

$$  S_0(u,g_0) =  \tilde S(u,g_0) = \big\{ x\in\Om \mid u(x) = g(x,y_0,z_0) \big\}, $$ 

\noindent when $g_0$ is a $g$-support of $u$.

Our approach here will be based on the following global extension of Theorem 2.1(i), which extends result (ii) in Theorem 1.2 in \cite{LT2020} to the generating function case. 

 \begin{theorem} 
Assume $g$ satisfies A1,A2,A1* and A3w, $u\in C^0(\Omega)$ is $g$-convex and $g_0 = g(\cdot, y_0,z_0)$ is  $g$-affine in a domain $\Omega$. Assume also:
\vspace{2mm}

(i) $\Omega$ is $g$-convex with respect to $(y_0, z_0);$
\vspace{2mm}

(ii) $(\cdot, g_0, [Dg_0,g_x(\cdot,y,g^*(\cdot,y,g_0))])(\Omega) \subset \mathcal U$ for all $y\in Tu(\Omega)$. 
\vspace{2mm}

\noindent Then the sections $S =  S(u,g_0)$ and $\tilde S= \tilde S(u,g_0)$ are also $g$-convex with respect with respect to $(y_0, z_0)$.
\end{theorem}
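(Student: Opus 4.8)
The plan is to reduce the global statement to the local result in Theorem 2.1(i) by a connectedness and approximation argument, following the scheme used in the optimal transportation case in \cite{LT2020}. First I would reformulate $g$-convexity of a section in the $q$-coordinates: under the coordinate change $x \mapsto q = Q(x, y_0, z_0)$, which is a well-defined diffeomorphism onto the convex set $Q_0(\Omega)$ by assumption (i) and condition A1*, the $g$-affine function $g_0$ becomes constant, so that the transformed section $\tilde S := Q_0(S(u, g_0))$ is $g$-convex with respect to $(y_0, z_0)$ precisely when it is an ordinary convex subset of $\mathbb{R}^n$. Thus the goal becomes: show $\tilde S$ is convex. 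Note the section is open (and $\tilde S(u,g_0)$ closed) by continuity of $u$ and $g_0$, and that assumption (ii) guarantees the one-jet stays inside $\mathcal U$ along all the relevant $g$-segments, so Theorem 2.1 and its consequences are applicable at every point in play.

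Next I would argue that it suffices to prove local $g$-convexity of $S$ at each of its points, i.e. that for each $x_1 \in S$ there is a neighbourhood on which $Q_0(S)$ is convex; a set in $\mathbb{R}^n$ all of whose points have convex neighbourhoods within it is convex provided it is connected — and connectedness along $g$-segments can be bootstrapped from the local statement by a standard continuity-method argument (the set of endpoints reachable from a fixed interior point by a $g$-segment lying in $S$ is open and closed in $S$). So the crux is the local claim. Fix $x_1 \in S$; then $u(x_1) < g_0(x_1)$, and since $u$ is $g$-convex it has a $g$-support $g_1 = g(\cdot, y_1, z_1)$ at $x_1$ with $y_1 = Tu(x_1) \in Tu(\Omega)$. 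I would then observe that on a neighbourhood of $x_1$ one has $g_1 \le u < g_0$, hence $S \supseteq \{g_1 < g_0\}$ near $x_1$; conversely, at $x_1$ itself the relevant geometry is controlled by comparing the $g$-affine functions $g_0$ and $g_1$. Setting $u_0 = g_0(x_1)$, $p_0 = Dg_0(x_1)$, $p_1 = g_x(x_1, y_1, g^*(x_1, y_1, u_0))$ — exactly the data appearing in hypothesis (ii) — the segment $[p_0, p_1]$ lies in $\mathcal U(x_1, u_0)$, and the set $\{g_1 < g_0\}$ near $x_1$ is precisely the set $S_1$ (in the notation of \eqref{2.2}) but now with the roles of the two $g$-affine functions as the endpoints $\theta = 0$ and $\theta = 1$. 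Theorem 2.1(i) — specifically $A3w \Longrightarrow A3w(1)$ — then gives that $S_1$ is locally $g$-convex at $x_1$ with respect to $(y_0, z_0)$, which is exactly the local convexity of $Q_0(S)$ near $Q_0(x_1)$ that we need.

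The remaining point is to pass from the open section $S$ to the closed section $\tilde S$ and the contact set: this follows because $\tilde S = \overline{S}$ whenever $S$ is nonempty with the relevant boundary structure (or one argues directly that the closure of a convex set is convex), and the contact set case $S_0(u,g_0) = \tilde S(u,g_0)$ when $g_0$ is a $g$-support is the instance where $S$ may be empty but $\tilde S$ is still a level-type set handled by the same local comparison with $\theta \to 0$. The main obstacle I anticipate is the local-to-global step: making rigorous that local $g$-convexity at every point plus $g$-segment connectedness yields global $g$-convexity, since unlike the Euclidean case the "segments" are curved and one must check that the continuity-method set is genuinely open and closed — this requires the uniformity in the estimates from Section 2 (the boundedness of $\Pi_\theta$ and the uniform convergence in \eqref{2.4}–\eqref{2.8}) together with hypothesis (ii) to keep everything inside $\mathcal U$ along the deformation. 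A secondary technical nuisance is verifying that the $g$-support $g_1$ can be chosen with its one-jet data satisfying the inclusion in (ii), but this is exactly what hypothesis (ii) is designed to supply, so it amounts to bookkeeping rather than a genuine difficulty.
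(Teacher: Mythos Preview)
Your reduction to local $g$-convexity of $S$ at each interior point has a genuine gap, in several places. First, the inclusion you write is reversed: since $g_1 \le u$ everywhere (being a $g$-support), one has $S = \{u < g_0\} \subseteq \{g_1 < g_0\}$, not $S \supseteq \{g_1 < g_0\}$. Second, your more careful choice $p_1 = g_x(x_1, y_1, g^*(x_1, y_1, u_0))$ with $u_0 = g_0(x_1)$ corresponds to a \emph{raised} $g$-affine function $\tilde g_1$ agreeing with $g_0$ at $x_1$, and Theorem~\ref{Th2.1} does give local $g$-convexity of $\tilde S_1 = \{\tilde g_1 < g_0\}$ at the boundary point $x_1 \in \partial \tilde S_1$; but $x_1$ is an \emph{interior} point of $S$, and $\tilde S_1$, $S$ are merely two subsets of $\{g_1 < g_0\}$ with no inclusion between them, so this says nothing about the local shape of $S$ near $x_1$. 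Third, the principle ``locally convex at every interior point plus connected implies convex'' is false for open sets (an L-shaped region is a counterexample: every interior point has a small ball inside the set); one needs local convexity at \emph{boundary} points, together with enough boundary regularity to run a Tietze--Nakajima type argument.

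The paper sidesteps all of this by first reducing to the case $u = g_1$ $g$-affine via the intersection formula $\tilde S(u, g_0) = \bigcap \tilde S(g_1, g_0)$ over all $g$-supports $g_1$ of $u$. In the $g$-affine case the interior boundary $\{g_1 = g_0\}$ is $C^2$, so A3w(1) applies at every such boundary point and local convexity upgrades to global convexity once connectedness is in hand. The connectedness argument is the genuinely new ingredient you are missing: one works in a $C^1$, $g$-convex $\Omega'\subset\subset\Omega$, raises $z_1$ continuously so that the (shrinking) components of $S_{1,\delta}$ eventually touch, uses A3w(1) to rule out touching in the interior, and then derives a contradiction with the $g$-convexity of $\Omega'$ at any boundary touching point. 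Your continuity-method sketch for connectedness along $g$-segments is not an adequate substitute, because the openness step would itself require the local convexity of $S$ at boundary points that you have not established.
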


\begin{proof}
To prove Theorem 3.1 we  follow the corresponding argument in the optimal transportation case \cite{LT2020}, modified in accordance with  the proof of Lemma 2.3 in \cite{T2020}. First we replace $\Omega$  by a $C^1$ subdomain $\Omega^\prime\subset\subset \Omega$, which is also $g$-convex with respect to $(y_0, z_0)$ and consider the special case, $u = g_1$ for some fixed $g$-affine function $g_1 = g (\cdot, y_1,z_1)$. From condition (ii) we then have $(x,y_1,z) \in \Gamma$ for all $x\in \Omega^\prime$, $z\ge z_1$ and $g(x,y_1,z) \ge  g_0(x) -  \epsilon$, for some constant $\epsilon>0$. Now suppose that the set $S_1 = S(g_1,g_0)$ has two disjoint components. By increasing $z_1$  and writing $g_{1, \delta} = g(\cdot, y_1, z_1+\delta)$, $S_{1,\delta}= S(g_{1, \delta}, g_0)$ for $\delta \ge 0$, we then obtain, from the $g$-convexity of $\Omega^\prime$, that the section $S_{1,\delta}$  has two distinct components for some $\delta \ge 0$, touching in $\bar\Omega$. From the local convexity, A3w(1) in Theorem 2.1, this can only happen at a point $\hat x \in \partial\Omega^\prime$.  For sufficiently small $\rho$, we then have that $g_{1, \delta} < g_0$ in 
$B_\rho\cap \partial\Omega^\prime - \{\hat x\}$ while  $g_{1, \delta}(x)  > g_0 (x)$ for $x = \hat x +t\nu$, $0 <t < \rho$, where  $\nu$ denotes the unit inner normal to $\partial \Omega^\prime$ at $\hat x$, which contradicts  the local $g$-convexity of $\Omega^\prime$ at $\hat x$. Consequently $S_1$ is connected and since it is  locally $g$-convex with respect to $(y_0,z_0)$, it is also globally $g$-convex with respect to $(y_0,z_0)$. Replacing $S_1$ by  $S_{1,\delta}$ and letting $\delta \rightarrow 0$, we also obtain the $g$-convexity of $\tilde S_1=\tilde S(g_1,g_0)$.

\noindent For the general case we write for $u$, $g$-convex in $\Omega$, 
$$\tilde S (u,g_0) = \cap \{\tilde S(g_1, g_0) \ | \ g_1 \text{is a $g$-support to} \ u \}$$
which gives the $g$-convexity of $\tilde S$ and consequently $S$ in general.
\end{proof}
 
 For further results  we will use the sub-convexity notion introduced in Section 2 of \cite{T2020} so that, for example, condition (ii) in Theorem  3.1 can be written equivalently as $\{y_0,y\}$ is sub $g^*$-convex with respect to $g_0$ on $\Omega$ for all $y\in Tu(\Omega)$ or that the $g^*$-segment, with respect to $(x,g_0(x))$, joining $y_0$ and $y$ is well defined for all $x\in \Omega, y\in Tu(\Omega)$. Recall also that the $g$-transform of a $g$-convex function $u$,  on a domain $\Omega$, is defined by 
\begin{equation} \label{3.4}
v(y) = u^*_g(y) = \text{sup}_{\Om} \ g^*(\cdot,y,u)
\end{equation}
\noindent for $y\in Tu(\Omega)$, so that from \eqref{3.1}, $v(y_0) = z_0$, if $g_0= g(\cdot,y_0,z_0)$   is a $g$-support to $u$.
 
  From Theorem 3.1 and the invariance of A3w under duality, we then have the following global extension  of Theorem 2.1(ii).

 \begin{corollary} 
Assume $g$ satisfies A1,A2,A1* and A3w and $u\in C^0(\Omega)$ is $g$-convex in a domain $\Omega$. Then, if for some $x_0 \in \Omega$ and all $x \in \Omega$,  the pair $\{x_0,x\}$ is sub $g$-convex with respect to $v = u^*_g$ on $\Sigma_0$, we have $Tu (x_0) = \Sigma_0$ is $g^*$-convex, with respect to $x_0$ and $u_0= u(x_0)$. 
\end{corollary}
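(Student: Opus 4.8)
\emph{Approach.} The plan is to obtain the Corollary from Theorem 3.1 applied to the \emph{dual} data, in the same way that Theorem 2.1(ii) was obtained from Theorem 2.1(i). First I set up the dual picture: the dual generating function $g^*$ satisfies A1 (which is A1$^*$ for $g$, in the reformulation noted at the end of Section 1), A2 (self-dual), A1$^*$ (which is A1 for $g$), and A3w (invariant under duality by Theorem 2.1); and the $g$-transform $v=u^*_g$, being a supremum of the functions $y\mapsto g^*(x,y,u(x))$ which are $g^*$-affine in $y$, is $g^*$-convex on the interior of its domain of definition (cf.\ \cite{T2014}), with $v(y_0)=z_0$ whenever $g_0=g(\cdot,y_0,z_0)$ is a $g$-support to $u$.

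\emph{Identifying the section.} Second, I identify the section of $v$ playing the role of $S(u,g_0)$ in Theorem 3.1. Put $g^*_0:=g^*(\cdot,x_0,u_0)$, a $g^*$-affine function of $y$. By \eqref{3.1}, $v(y)=\sup_\Omega g^*(\cdot,y,u)\ge g^*(x_0,y,u(x_0))=g^*_0(y)$, so $g^*_0$ is a $g^*$-support to $v$, with nonempty contact set since $u$ has a $g$-support at $x_0$. Using $g^*_u=1/g_z<0$ together with $g(x,y,g^*(x,y,u))=u$, the inequality $g^*(x,y,u(x))\le g^*(x_0,y,u_0)$ is equivalent to $u(x)\ge g(x,y,z_0)$ with $z_0=g^*(x_0,y,u_0)$; hence the closed section $\tilde S(v,g^*_0)$, which here is the contact set $S_0(v,g^*_0)$, equals $Tu(x_0)$. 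I also record that $P(x_0,\cdot,u_0)$ is inverse to $Y(x_0,u_0,\cdot)$, so it carries $\Sigma_0=Y(x_0,u_0,\partial u(x_0))$ bijectively onto $\partial u(x_0)$; in particular a subset $E$ of $y$-space is $g^*$-convex with respect to $(x_0,u_0)$ exactly when $P(x_0,\cdot,u_0)(E)$ is convex.

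\emph{Applying Theorem 3.1.} Third, I apply Theorem 3.1 to $(g^*,v,g^*_0)$ on a domain $\Omega^*$ in $y$-space. Since $J_1[u](x_0)=[x_0,u_0,\partial u(x_0)]\subset\subset\mathcal U$, I pick a bounded convex open $W$ with $\partial u(x_0)\subset W\subset\subset\mathcal U(x_0,u_0)$, small enough that $\Omega^*:=Y(x_0,u_0,W)$ lies in the domain of $v$; then $P(x_0,\cdot,u_0)(\Omega^*)=W$ is convex, so $\Omega^*$ is $g^*$-convex with respect to $(x_0,u_0)$ (the dual of hypothesis (i) of Theorem 3.1) and $\Sigma_0\subset\Omega^*$. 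The assumption that $\{x_0,x\}$ is sub $g$-convex with respect to $v$ on $\Sigma_0$ for all $x\in\Omega$ then supplies the dual of hypothesis (ii), via the sub-convexity reformulation of (ii) recorded after its proof and the fact that $v=g^*_0$ on $\Sigma_0$. Theorem 3.1 now yields that $\tilde S(v,g^*_0)=Tu(x_0)$ is $g^*$-convex with respect to $(x_0,u_0)$.

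\emph{Upgrading to $\Sigma_0$, and the main obstacle.} Finally, I upgrade $Tu(x_0)$ to $\Sigma_0$. One always has $Tu(x_0)\subseteq\Sigma_0$, so $P(x_0,\cdot,u_0)(Tu(x_0))$ is a convex subset of $\partial u(x_0)$. If $x_k\to x_0$ with $u$ differentiable at $x_k$ and $Du(x_k)\to p$, then $Tu(x_k)=\{Y(x_k,u(x_k),Du(x_k))\}$ is a singleton converging, by closedness of the $g$-normal mapping, to a point of $Tu(x_0)$ with $P(x_0,\cdot,u_0)$-image $p$; as $\partial u(x_0)$ is the closed convex hull of all such $p$, convexity forces $P(x_0,\cdot,u_0)(Tu(x_0))=\partial u(x_0)$, i.e.\ $Tu(x_0)=\Sigma_0$. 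I expect the main obstacle to be the duality bookkeeping in the third step — verifying that condition (ii) of Theorem 3.1 dualizes precisely to the stated sub $g$-convexity with respect to $v$, and that $v$ is genuinely $g^*$-convex on an admissible $\Omega^*$ satisfying condition (i) — while the identification $Tu(x_0)=\Sigma_0$ in the last step is the only other point requiring a short separate argument.
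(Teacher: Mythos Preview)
Your proposal is correct and follows essentially the same route as the paper: apply Theorem~3.1 in the dual setting on a $g^*$-convex neighbourhood $\Omega^*$ of $\Sigma_0$ to obtain the $g^*$-convexity of $Tu(x_0)$ (identified as the contact set $S_0(v,g^*_0)$), and then use the semi-convexity of $u$ to conclude $Tu(x_0)=\Sigma_0$. The paper phrases the last step as ``$P(x_0,u_0,\Sigma_0)$ is the convex hull of $P(x_0,u_0,Tu(x_0))$'', which is precisely your reachable-gradient argument, and it flags the same duality bookkeeping you identify as the main point to check.
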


Note that by the semi-convexity of $u$,  $P( x_0, u_0, \Sigma_0)$ is the convex hull of  $P(x_0, u_0,Tu(x_0))$ so Corollary 3.1  follows from the $g^*$- convexity of $Tu(x_0)$, which in turn follows directly using duality with domain $\Omega^*$ a neighbourhood of $\Sigma_0$, which is also $g^*$- convex with respect to $x_0,u_0$.  But we may also proceed slightly differently as in \cite{T2020} by proving first a special case when $u$ is replaced by $\max\{ g_0, g_1\}$ where $g_0$ and $g_1$ are two g-affine functions satisfying $g_1(x_0) = g_0(x_0) = u_0$. Then using the notation in Theorem 3.2, if $\{x_0, x\}$ is sub $g$-convex with respect to $(y_\theta, z_\theta)$, for all $\theta\in (0,1)$, we have the inequality,
\begin{equation}
g(x, y_\theta, z_\theta) \le \max\{ g_0(x), g_1(x)\} 
\end{equation}
which is the global version of A3w(2) in Theorem 2.1.

Finally we consider the global $g$-convexity of locally $g$-convex functions thereby  extending Lemmas 2.1 in \cite{T2014, T2020} to the non-smooth case and part (iv) of Theorem 1.2 in \cite{LT2020} to the generating function case. Here we will define a function $u\in C^0(\Omega)$ to be \emph{locally} $g$-\emph{convex} in $\Omega$, at any point $x_0\in \Omega$, $u$ has a $g$-support in some  neighbourhood $\mathcal N_0$ of $x_0$. 


\begin{theorem}
Assume $g$ satisfies A1,A2,A1* and A3w and $u\in C^0(\bar \Omega)$ is locally $g$-convex in a domain $\Omega$.  Assume also:
\vspace{2mm}

(i) $\Omega$ is $g$-convex with respect to $(y,z)$ for all $y\in \Sigma_u(\Omega)$, $z \in g^*(\cdot , y, u) (\bar\Omega)$
\vspace{2mm}

(ii) $\Sigma_u(\Omega)$ is sub $g^*$-convex with respect to $u$ on $\Omega$.
\vspace{2mm}

Then $u$ is $g$-convex in $\Omega$. 
\end{theorem}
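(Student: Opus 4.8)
The plan is to reduce the global statement to the local-to-global machinery already established, exploiting duality as the main lever. Since $u$ is locally $g$-convex, at each $x_0 \in \Omega$ we have a $g$-support $g_{y_0,z_0} := g(\cdot,y_0,z_0)$ on a neighbourhood $\mathcal N_0$, and the task is to upgrade this to a global support on all of $\Omega$. First I would pass to the $g$-transform picture: set $v = u^*_g$ on $\Sigma_u(\Omega)$, which is well defined because of hypothesis (ii) (sub $g^*$-convexity guarantees the relevant $g^*$-segments exist). For each fixed $x_0$ and its local support data $(y_0,z_0)$, I want to show the local support is in fact global, i.e. $u(x) \ge g(x,y_0,z_0)$ for all $x \in \Omega$. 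The strategy is to argue by the same connectedness/topological mechanism used in the proof of Theorem 3.1: consider the section $S = \{x \in \Omega \mid u(x) < g(x,y_0,z_0)\}$ and show it is empty, using that any nonempty such section, being locally $g$-convex at boundary points by Theorem 2.1(i) (A3w(1)), together with the global $g$-convexity of $\Omega$ with respect to $(y_0,z_0)$ from hypothesis (i), forces a contradiction at an interior versus boundary touching point exactly as in the proof of Theorem 3.1.

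The key steps, in order, are: (1) fix $x_0 \in \Omega$, extract the local $g$-support $g_0 = g(\cdot,y_0,z_0)$ valid on $\mathcal N_0$, with $y_0 \in \Sigma_u(x_0)$, $z_0 = g^*(x_0,y_0,u(x_0))$; (2) introduce the perturbed supports $g_{0,\delta} = g(\cdot,y_0,z_0+\delta)$ for $\delta \ge 0$, which lie strictly below $u$ near $x_0$ for $\delta > 0$, and let $S_\delta = S(u,g_{0,\delta})$; (3) if $u$ is not globally $g$-convex, then for some base point the set $\{x \in \Omega \mid u(x) < g_{0}(x)\}$ is nonempty; using hypothesis (i) (g-convexity of $\Omega$ w.r.t. every relevant $(y,z)$) and lowering/raising $\delta$, produce a configuration where a component of $S_\delta$ on which $g_{0,\delta}$ is a genuine support meets another region across a point $\hat x$; (4) invoke A3w(1) (local $g$-convexity of sublevel sets established in Theorem 2.1) to conclude such a touching cannot occur at an interior point of $\Omega$, and then use the local $g$-convexity of $\Omega$ at $\hat x \in \partial\Omega$ to rule out the boundary case as well, precisely replicating the $B_\rho \cap \partial\Omega'$ argument of Theorem 3.1; (5) conclude $S = \emptyset$, so the local support is global, hence $u$ is $g$-convex. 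Hypothesis (ii) enters to ensure that the dual objects $z_\theta$, $v$, and the $g^*$-segments appearing in these sublevel-set arguments are all well defined, so that Theorem 3.1 and its proof can be applied verbatim to the pieces.

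Concretely I would mirror the proof of Theorem 3.1: replace $\Omega$ by a $C^1$ subdomain $\Omega' \subset\subset \Omega$ that is still $g$-convex with respect to $(y_0,z_0)$, reduce to showing that $S(u,g_{0,\delta}) \cap \Omega'$ cannot split into two components touching in $\bar\Omega'$, and run the inner-normal contradiction at the touching point using local $g$-convexity of both the section (from A3w(1)) and of $\Omega'$. Exhausting $\Omega$ by such $\Omega'$ and sending $\delta \to 0$ gives the global support. The main obstacle I anticipate is the bookkeeping needed to guarantee that, as we move the base point $x_0$ around $\Omega$ and perturb $\delta$, the hypotheses of Theorem 3.1 — in particular the two sub-convexity/well-definedness conditions on $\mathcal U$ and on the $g^*$-segments — remain satisfied uniformly; this is where hypotheses (i) and (ii) must be used carefully, and it is the analogue of the compatibility conditions (i),(ii) in Theorem 3.1. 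Once that is in place, the topological contradiction argument is essentially identical to the smooth case in Lemma 2.1 of \cite{T2014} and to part (iv) of Theorem 1.2 in \cite{LT2020}, with A3w(1) from Theorem 2.1 supplying the local convexity input that in the smooth setting came from differential inequalities.
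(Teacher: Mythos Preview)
Your proposal is essentially correct and matches the paper's own treatment, which consists of the single sentence ``Corresponding to \cite{LT2020}, the proof of Theorem 3.2 is just a modification of that of Theorem 3.1''; you have correctly identified that modification as running the touching/connectedness argument of Theorem 3.1 with A3w(1) supplying the local $g$-convexity of sublevel sets, hypotheses (i) and (ii) guaranteeing the needed domain and segment well-definedness, and local supports of $u$ standing in for the global supports used in Step B of Theorem 3.1. One point to sharpen: in Theorem 3.1 the contradiction came from two components of the \emph{same} section $S_{1,\delta}$ touching, whereas here the goal is emptiness of $S(u,g_0)$, so the relevant configuration is the section $S_\delta$ meeting the ``good'' neighbourhood $\mathcal N_0$ of $x_0$ (where $u\ge g_0$) at a first contact point $\hat x$ along a $g$-segment, with the local support $g_1$ to $u$ at $\hat x$ and A3w(1) applied to $S(g_1,g_{0,\delta})\supset S_\delta$ yielding the contradiction---your step (3) gestures at this but phrases it as a two-component picture, which is not quite the right geometry.
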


Note that in \cite{T2014, T2020}, we have defined local $g$-convexity for a $C^2$ function $u$ by the degenerate ellipticity condition \eqref{3.2}. Clearly
if $u$ is elliptic in $\Omega$, that is inequality \eqref{3.2} is strict in $\Omega$, then $u$ is locally $g$-convex as above. From this it follows by approximation, $u \rightarrow u +\epsilon (x-x_0)^2$, for small $\epsilon >0$ and Theorem 3.2, that our definitions are equivalent if $A$ is Lipschitz continuous with respect to the $u$ and $p$ variables. 

Corresponding to  \cite{LT2020}, the proof of Theorem 3.2 is just a modification of that of Theorem 3.1.

Finally we remark that from Theorem 2.2, by adapting the approach in \cite{Loe2009}, we can obtain the $C^1$ and $C^{1,\alpha}$ regularity of generalized solutions of the second boundary value for generated Jacobian equations for $C^2$ generating functions satisfying  A1,A2, A1* and A3s under appropriate integrability or boundedness conditions on the initial and target densities, $f$ and $f^*$, and convexity conditions on the initial and target domains, $\Omega$ and $\Omega^*$. In particular, the corresponding regularity results in the optimal transportation case, in Theorems 3.4 and 3.7  of \cite{Loe2009}, may be extended to $C^2$ cost functions  while their extensions to generated Jacobian equations, proved recently in Theorem 2.14 of \cite{Je}, may be extended to $C^2$ generating functions. In fact these extensions do not need the full strength of the implication $A3s\longrightarrow A3s(2)$ in Theorem 2.2 and the estimate \eqref{2.15},  which already is a refinement of Proposition 5.1 in \cite{Loe2009} and Lemma 3.3 in \cite{Je}, is sufficient. For this we also need the characterisation of the $g$-normal mapping in Corollary 3.1. 

For a formulation of the generalized second boundary value problem for generated Jacobian equations we may refer, for example, to Section 4 in \cite{T2014} or Section 3 in \cite{T2020}.

\baselineskip=12pt
\parskip=0pt

\end{document}